\newtheorem{remark}{Remark}
\begin{document}


\title{A note on convergence in mean for $d$-dimensional arrays of random vectors in Hilbert spaces under the Ces\`{a}ro uniform integrability}

\author{\firstname{Dat}~\surname{Thai Van}}
\email[E-mail: ]{thaidatpbc@gmail.com}
\affiliation{Department of Mathematics, Vinh University, Nghe An, Vietnam}



\begin{abstract}
    This note establishes convergence in mean of order $p$, $0<p\le 1$ for $d$-dimensional arrays of random vectors in Hilbert spaces under the Ces\`{a}ro uniform integrability conditions.
    In the case where $0<p<1$, our $L_p$ convergence is valid irrespective of any dependence structure.
    In the case where $p=1$, the underlying random vectors are supposed to be pairwise independent.
    The mean convergence results are established for maximal partial sums while previous contributions were so far considered partial sums only. 
    Some results in the literature are extended. Various properties of the Ces\`{a}ro uniform integrability
    of $d$-dimensional arrays of random vectors such as the classical equivalent criterion and
    the de La Vall\'{e}e Poussin theorem are also detailed.
\end{abstract}

\subclass{60F25}

\keywords{Ces\`{a}ro uniform integrability, De La Vall\'{e}e Poussin criterion, Multidimensional array, Hilbert space-valued random vector, Convergence in mean}

\maketitle

    \section{Introduction}
    Throughout, all random vectors are defined on a probability space $\left( \Omega,\mathcal{F}, \mathbb{P} \right)$ and take values in a real separable Hilbert space $\mathcal{H}$ with inner
    product $\langle \cdot , \cdot \rangle$ and corresponding norm $\|\cdot \|$.

    Let $\mathbb{Z}^d_+$, where $d$ is a positive integer, denote the positive integer $d$-dimensional lattice points. The notation $\mathbf {m \prec n}$ (or $\mathbf{n}\succ\mathbf{m}$),
    where $\mathbf m$ = $(m_1, m_2,..., m_d)$ and $\mathbf n$ = $(n_1, n_2,..., n_d) \in \mathbb{Z}^d_+,$
    means that $m_i \leq n_i, 1 \leq i \leq d$. For $\mathbf n$ = $(n_1, n_2,..., n_d) \in \mathbb{Z}^d_+,$ we denote $\mathbf{|{\mathbf{n}}|} = \prod _{i = 1}^d {n_i}$.
    We also use notation $\mathbf{1} = (1,\ldots,1)$.

    The concept of uniform integrability in the Ces\`{a}ro sense for sequences of
    real valued-random variables was introduced by Chandra \cite{chandra1989uniform}. We say that a $d$-dimensional array of real-valued random variables
     $ \left \{ X_{\mathbf{n}}, \mathbf{n} \in \mathbb{Z}^{d}_{+} \right \}$ is said to be \textit{uniformly intergrable in the Ces\`{a}ro sense} if
    \[
    \lim_{a\to \infty}  \sup\limits_{\mathbf{n} \succ \mathbf{1}} \dfrac{1}{\left | \mathbf{n} \right |}  \sum\limits_{\mathbf{i \prec n}}
     \mathbb{E}\left(|X_{\mathbf{i}}|\mathbf{1}\left(|X_{\mathbf{i}}| > a \right) \right) = 0.
    \]

Convergence in mean for sequences and arrays of random variables and
random vectors under uniform integrability conditions was studied by many authors.
We refer to \cite{adler1997mean,cabrera2005mean,chandra1989uniform,hong1999marcinkiewicz-type,thanh2005lp} and the references therein.
However, to our best knowledge, non of these authors considered
convergence in mean for maximal partial normed sums.
This problem was studied by Rosalsky and Th\`{a}nh \cite[Theorems 3.1 and 3.2]{rosalsky2007almost} but under a Kolmogorov-type condition.

In this paper, we establish convergence in mean of order $p$, $0<p\le 1$, for maximal normed partial sums from
$d$-dimensional arrays of $\mathcal{H}$-valued random vectors under the Ces\`{a}ro uniform integrablity conditions.
Some results in Th\`{a}nh \cite{thanh2005lp}, Hong and Volodin
 \cite{hong1999marcinkiewicz-type} are obtained as special cases.

\section{Main results}
In this section, we extend the result of Th\`{a}nh \cite[Theorem 2.1]{thanh2005lp} for $d$-dimensional arrays of Hilbert space-valued
random vectors under the Ces\`{a}ro uniform integrability conditions. The first theorem establishes a result on convergence in mean of order $p$
for maximal partial normed sums
for the case $0<p<1$. In Theorem \ref{theorem21}, we do not require any dependence structure of the underlying random vectors.


        \begin{theorem}\label{theorem21}
    Let $0<p<1$ and let $\{ X_{\mathbf{n}}, \mathbf{n} \in \mathbb{Z}^{d}_{+}\}$ be a $d$-dimensional array of $\mathcal{H}$-valued random vectors.
If $ \left \{ \|X_{\mathbf{n}}\|^p, \mathbf{n} \in \mathbb{Z}^{d}_{+} \right \} $ is uniformly intergrable in the Ces\`{a}ro sense, then
            \begin{equation}\label{21}
                \dfrac{\max_{\mathbf{1\prec k\prec n}} \left\|\sum_{\mathbf{i\prec k}} X_{\mathbf{i}}\right\|}{|\mathbf{n}|^{1/p}} \rightarrow 0 \text{ in } L_p \text{ as } |\mathbf{n}| \rightarrow \infty.
            \end{equation}
        \end{theorem}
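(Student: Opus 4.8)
The plan is to translate the $L_p$ assertion into a bound on a Ces\`{a}ro average of $\mathbb{E}\|X_{\mathbf{i}}\|^p$, which is exactly the quantity controlled by the hypothesis. Unwinding the definition, proving \eqref{21} amounts to showing that
\[
\frac{1}{|\mathbf{n}|}\,\mathbb{E}\left(\max_{\mathbf{1}\prec\mathbf{k}\prec\mathbf{n}}\left\|\sum_{\mathbf{i}\prec\mathbf{k}}X_{\mathbf{i}}\right\|\right)^{p}\longrightarrow 0\quad\text{as }|\mathbf{n}|\to\infty.
\]
The feature that makes the regime $0<p<1$ tractable, and the maximum essentially free, is the subadditivity of $t\mapsto t^{p}$ on $[0,\infty)$, namely $(x+y)^{p}\le x^{p}+y^{p}$ for $x,y\ge0$.

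First I would record the deterministic domination of the maximum: for each $\mathbf{k}\prec\mathbf{n}$ the triangle inequality in $\mathcal{H}$ gives $\|\sum_{\mathbf{i}\prec\mathbf{k}}X_{\mathbf{i}}\|\le\sum_{\mathbf{i}\prec\mathbf{k}}\|X_{\mathbf{i}}\|\le\sum_{\mathbf{i}\prec\mathbf{n}}\|X_{\mathbf{i}}\|$, so the maximum over $\mathbf{k}$ is dominated by the single sum $\sum_{\mathbf{i}\prec\mathbf{n}}\|X_{\mathbf{i}}\|$; this is where the maximal form costs nothing. The next step is truncation at a level $a>0$: set $Y_{\mathbf{i}}=X_{\mathbf{i}}\mathbf{1}(\|X_{\mathbf{i}}\|^{p}\le a)$ and $Z_{\mathbf{i}}=X_{\mathbf{i}}\mathbf{1}(\|X_{\mathbf{i}}\|^{p}>a)$, so that $X_{\mathbf{i}}=Y_{\mathbf{i}}+Z_{\mathbf{i}}$. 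Applying the triangle inequality to the two families of partial sums and then subadditivity of $t\mapsto t^{p}$ yields
\[
\left(\max_{\mathbf{k}\prec\mathbf{n}}\left\|\sum_{\mathbf{i}\prec\mathbf{k}}X_{\mathbf{i}}\right\|\right)^{p}\le\left(\max_{\mathbf{k}\prec\mathbf{n}}\left\|\sum_{\mathbf{i}\prec\mathbf{k}}Y_{\mathbf{i}}\right\|\right)^{p}+\left(\max_{\mathbf{k}\prec\mathbf{n}}\left\|\sum_{\mathbf{i}\prec\mathbf{k}}Z_{\mathbf{i}}\right\|\right)^{p},
\]
reducing the problem to two independent estimates.

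For the bounded part, $\|Y_{\mathbf{i}}\|\le a^{1/p}$ forces $\max_{\mathbf{k}\prec\mathbf{n}}\|\sum_{\mathbf{i}\prec\mathbf{k}}Y_{\mathbf{i}}\|\le\sum_{\mathbf{i}\prec\mathbf{n}}\|Y_{\mathbf{i}}\|\le|\mathbf{n}|\,a^{1/p}$, whence $\frac{1}{|\mathbf{n}|}\mathbb{E}(\max_{\mathbf{k}}\|\sum Y\|)^{p}\le a\,|\mathbf{n}|^{p-1}$; since $p-1<0$ this vanishes as $|\mathbf{n}|\to\infty$ for each fixed $a$. For the tail part, combining the deterministic domination with subadditivity gives $(\max_{\mathbf{k}}\|\sum Z\|)^{p}\le\sum_{\mathbf{i}\prec\mathbf{n}}\|X_{\mathbf{i}}\|^{p}\mathbf{1}(\|X_{\mathbf{i}}\|^{p}>a)$, so that $\frac{1}{|\mathbf{n}|}\mathbb{E}(\max_{\mathbf{k}}\|\sum Z\|)^{p}\le\sup_{\mathbf{n}\succ\mathbf{1}}\frac{1}{|\mathbf{n}|}\sum_{\mathbf{i}\prec\mathbf{n}}\mathbb{E}(\|X_{\mathbf{i}}\|^{p}\mathbf{1}(\|X_{\mathbf{i}}\|^{p}>a))$, which is precisely the quantity that Ces\`{a}ro uniform integrability drives to $0$ as $a\to\infty$, uniformly in $\mathbf{n}$.

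Finally I would assemble the two bounds with the standard order of quantifiers: given $\varepsilon>0$, first use Ces\`{a}ro uniform integrability to fix $a$ so large that the tail supremum lies below $\varepsilon/2$, and then, with $a$ frozen, choose $|\mathbf{n}|$ large enough that $a\,|\mathbf{n}|^{p-1}<\varepsilon/2$. I do not expect a serious obstacle in this range of $p$: the subadditivity trick disposes of both the norm-power and the maximum simultaneously, and the only points requiring care are the correct order of choosing $a$ before letting $|\mathbf{n}|\to\infty$, together with a brief remark that the hypothesis forces $\mathbb{E}\|X_{\mathbf{i}}\|^{p}<\infty$ so that all the expectations above are finite.
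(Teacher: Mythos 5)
Your proposal is correct and follows essentially the same route as the paper: truncation at a fixed level, the subadditivity of $t\mapsto t^{p}$ (the $c_p$-inequality), domination of the maximal partial sum by the full sum of norms, the Ces\`{a}ro uniform integrability hypothesis for the tail part, and the crude bound $|\mathbf{n}|^{p}a/|\mathbf{n}|\to 0$ for the bounded part. The only differences are cosmetic (your truncation threshold is stated in terms of $\|X_{\mathbf{i}}\|^{p}$ rather than $\|X_{\mathbf{i}}\|$, and your labels for the truncated and tail parts are swapped relative to the paper's $Y_{\mathbf{i}}$, $Z_{\mathbf{i}}$).
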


        \begin{proof}
Let $\varepsilon > 0$ be arbitrary. Since $\left \{\|X_{\mathbf{n}}\|^p, \mathbf{n} \in \mathbb{Z}^{d}_{+} \right \} $ is uniformly intergrable in the Ces\`{a}ro sense, there exists a constant
$a > 0$ such that
            \begin{equation}\label{22}
                \sup\limits_{\mathbf{n} \succ \mathbf{1}} \dfrac{1}{\left | \mathbf{n} \right |}  \sum\limits_{\mathbf{i \prec n}} \mathbb{E}\left( \|X_{\mathbf{i}}\|^p\mathbf{1}\left( \|X_{\mathbf{i}}\| > a \right) \right) < \varepsilon.
            \end{equation}
For each $\mathbf{i} \succ \mathbf{1}$, put
           $
            Y_\mathbf{i} = X_{\mathbf{i}} \mathbf{1}\left( \|X_{\mathbf{i}}\| > a \right)$  and $ Z_\mathbf{i} = X_{\mathbf{i}}\mathbf{1}\left( \|X_{\mathbf{i}}\| \leq a \right).
            $
By the $c_p$-inequality (see, e.g., \cite[Lemma A.5.1]{gut2013probability}) and \eqref{22}, we have

$$
                               \mathbb{E} \left( \dfrac{\max_{\mathbf{1\prec k\prec n}} \left\|\sum_{\mathbf{i\prec k}} X_{\mathbf{i}}\right\|}{|\mathbf{n}|^{1/p}} \right)^p
                \leq \dfrac{1}{|\mathbf{n}|} \mathbb{E} \left( \max_{\mathbf{1\prec k\prec n}} \left\|\sum_{\mathbf{i\prec k}} Y_{\mathbf{i}}\right\| +
                \max_{\mathbf{1\prec k\prec n}} \left\|\sum_{\mathbf{i\prec k}} Z_{\mathbf{i}}\right\| \right)^p
$$
$$
                \leq \dfrac{1}{|\mathbf{n}|} \mathbb{E} \left( \sum_{\mathbf{i\prec n}} \|Y_\mathbf{i}\| + \sum_{\mathbf{i\prec n}} \|Z_\mathbf{i}\| \right)^p\\
                \leq \dfrac{1}{|\mathbf{n}|} \left( \sum_{\mathbf{i\prec n}} \mathbb{E}\|Y_{\mathbf{i}}\|^p + \mathbb{E}
                \left(  \sum_{\mathbf{i\prec n}} \|Z_\mathbf{i}\| \right)^p \right)
             $$
 \begin{equation}\label{23}
               \leq \dfrac{1}{|\mathbf{n}|}\left( |\mathbf{n}|\varepsilon + \left( |\mathbf{n}| a \right)^p \right)
                = \varepsilon + \dfrac{a^p}{|\mathbf{n}|^{1-p}}.
                     \end{equation}
Since $0<p<1$ and $\varepsilon>0$ is arbitrary, it follows from \eqref{23} that
            \[
            \mathbb{E} \left( \dfrac{\max_{\mathbf{1\prec k\prec n}} \left\|\sum_{\mathbf{i\prec k}} X_{\mathbf{i}}\right\|}{|\mathbf{n}|^{1/p}} \right)^p \rightarrow 0 \text{ as } |\mathbf{n}| \rightarrow \infty.
            \]
Thus \eqref{21} holds.
        \end{proof}

\begin{remark}
    {
        \rm
        In Th\`{a}nh \cite[Theorem 2.1]{thanh2005lp}, the author proved convergence in mean of order $p$ for partial sums
        from $d$-dimensional arrays of
        real-valued random variables under a stronger condition that $\{|X_{\mathbf{n}}|^p, \mathbf{n} \in \mathbb{Z}^{d}_{+}\}$
        is uniformly integrable. It was shown by Gut \cite{gut1992complete} that the concept of uniform integrability in the Ces\`{a}ro sense is strictly weaker
        the concept of uniform integrability. Relationships between uniform integrability and stochastic domination was studied by Rosalsky and Th\`{a}nh \cite{rosalsky2021note}.
    }
\end{remark}

Before establishing result on $L_1$ convergence, we present the following simple lemma. This lemma is $d$-dimensional version of Lemma 3.2 of \cite{moricz1994strong} (see also \cite[Corollary 6]{moricz1977moment}).
        \begin{lemma}\label{lemma24}
            Let $n = (n_1,n_2,\ldots,n_d) \in \mathbb{Z}^{d}_{+}$ and let $\{ X_{\mathbf{n}}, \mathbf{n} \in \mathbb{Z}^{d}_{+}\}$ be a $d$-dimensional array of pairwise independent $\mathcal{H}$-valued random vectors. If  $\mathbb{E} X_\mathbf{i} = 0$ for all $\mathbf{i} \succ \mathbf{1}$, then there exists a constant $C>0$ such that
            \[
                \mathbb{E} \left( \max_{\mathbf{1\prec k\prec n}} \left\| \sum_{\mathbf{i\prec k}} X_{\mathbf{i}} \right\|^2 \right) \leq C\log^2(2n_1)\log^2(2n_2)\cdots \log^2(2n_d) \sum_{\mathbf{i\prec n}} E\|X_{\mathbf{i}}\|^2,
            \]
            with $\log x$ is the natural logaritheorem (base $e$) of $x$, $x\ge 1$.
        \end{lemma}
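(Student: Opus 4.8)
The plan is to reduce the statement to the multidimensional Rademacher--Menshov maximal inequality, the key input being that pairwise independence together with $\mathbb{E}X_{\mathbf{i}}=0$ turns the array into an orthogonal field. We may assume $\sum_{\mathbf{i}\prec\mathbf{n}}\mathbb{E}\|X_{\mathbf{i}}\|^2<\infty$, since otherwise there is nothing to prove. First I would record the orthogonality relation: for $\mathbf{i}\neq\mathbf{j}$ the vectors $X_{\mathbf{i}}$ and $X_{\mathbf{j}}$ are independent and centered, so $\mathbb{E}\langle X_{\mathbf{i}},X_{\mathbf{j}}\rangle=\langle\mathbb{E}X_{\mathbf{i}},\mathbb{E}X_{\mathbf{j}}\rangle=0$. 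Consequently, for every finite block $B\subset\mathbb{Z}^d_+$,
\[
\mathbb{E}\Big\|\sum_{\mathbf{i}\in B}X_{\mathbf{i}}\Big\|^2=\sum_{\mathbf{i}\in B}\mathbb{E}\|X_{\mathbf{i}}\|^2,
\]
and this additivity over disjoint blocks is the only probabilistic property of the array I will use from here on.

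Second, I would run the dyadic bisection argument, carried out simultaneously in all $d$ coordinates. For each coordinate $j$ choose $N_j$ with $2^{N_j-1}<n_j\le 2^{N_j}$, and for $0\le l\le N_j$ let $\mathcal{B}^{(j)}_l$ denote the partition of $\{1,\dots,n_j\}$ into consecutive dyadic intervals of length $2^l$. The crucial combinatorial fact is that every initial segment $\{1,\dots,k_j\}$ is a disjoint union containing at most one interval from each level $l$; taking products over $j$, every corner rectangle $\{\mathbf{i}\prec\mathbf{k}\}$ is a disjoint union of dyadic boxes containing at most one box per multi-level $\mathbf{l}=(l_1,\dots,l_d)\in\prod_{j}\{0,\dots,N_j\}$. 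Writing $S(B)=\sum_{\mathbf{i}\in B}X_{\mathbf{i}}$ and $M=\prod_{j=1}^d(N_j+1)$, the triangle inequality followed by the Cauchy--Schwarz inequality over the $M$ multi-levels gives, uniformly in $\mathbf{k}$,
\[
\Big\|\sum_{\mathbf{i}\prec\mathbf{k}}X_{\mathbf{i}}\Big\|^2\le M\sum_{\mathbf{l}}\max_{B\in\mathcal{B}_{\mathbf{l}}}\|S(B)\|^2,
\]
where $\mathcal{B}_{\mathbf{l}}$ is the family of dyadic boxes at multi-level $\mathbf{l}$.

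Third, I would take the maximum over $\mathbf{1}\prec\mathbf{k}\prec\mathbf{n}$ and then expectations, estimating $\mathbb{E}\max_{B\in\mathcal{B}_{\mathbf{l}}}\|S(B)\|^2\le\sum_{B\in\mathcal{B}_{\mathbf{l}}}\mathbb{E}\|S(B)\|^2$. Since for fixed $\mathbf{l}$ the boxes of $\mathcal{B}_{\mathbf{l}}$ are pairwise disjoint and contained in $\{\mathbf{i}\prec\mathbf{n}\}$, the orthogonality relation collapses this to $\sum_{B\in\mathcal{B}_{\mathbf{l}}}\sum_{\mathbf{i}\in B}\mathbb{E}\|X_{\mathbf{i}}\|^2\le\sum_{\mathbf{i}\prec\mathbf{n}}\mathbb{E}\|X_{\mathbf{i}}\|^2$. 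Summing the $M$ contributions yields
\[
\mathbb{E}\Big(\max_{\mathbf{1}\prec\mathbf{k}\prec\mathbf{n}}\Big\|\sum_{\mathbf{i}\prec\mathbf{k}}X_{\mathbf{i}}\Big\|^2\Big)\le M^2\sum_{\mathbf{i}\prec\mathbf{n}}\mathbb{E}\|X_{\mathbf{i}}\|^2,
\]
and since $M^2=\prod_{j}(N_j+1)^2\le C\prod_{j}\log^2(2n_j)$ after converting from base-two to natural logarithms and absorbing the resulting dimensional factor into $C$, the claim follows.

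The orthogonality reduction and the final summation are routine, so the main obstacle is the middle step: setting up the simultaneous $d$-dimensional dyadic decomposition and verifying the two combinatorial facts it rests on, namely that a corner rectangle meets each multi-level in at most one box (which controls the number $M$ of terms in the Cauchy--Schwarz step) and that the boxes of a fixed multi-level partition a subset of $\{\mathbf{i}\prec\mathbf{n}\}$ (which is what lets orthogonality collapse each level's second-moment sum to the full sum). Keeping the indexing of boxes consistent across the triangle-inequality, maximization, and expectation steps is where the care is needed.
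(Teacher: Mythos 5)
Your proof is correct, but note that it cannot be compared line-by-line with the paper's argument, because the paper offers none: Lemma \ref{lemma24} is stated with only a pointer to Lemma 3.2 of \cite{moricz1994strong} (see also \cite[Corollary 6]{moricz1977moment}), of which it is declared to be the $d$-dimensional version. What you have written is a self-contained reconstruction of precisely the Rademacher--Menshov-type dyadic argument that underlies those cited results, and every step checks out: pairwise independence plus $\mathbb{E}X_{\mathbf{i}}=0$ gives $\mathbb{E}\langle X_{\mathbf{i}},X_{\mathbf{j}}\rangle=0$ (the inner product is integrable since $\mathbb{E}\left(\|X_{\mathbf{i}}\|\,\|X_{\mathbf{j}}\|\right)=\mathbb{E}\|X_{\mathbf{i}}\|\,\mathbb{E}\|X_{\mathbf{j}}\|<\infty$ once the right-hand side of the lemma is assumed finite), so second moments add over disjoint blocks; the binary expansion of $k_j$ decomposes each initial segment $\{1,\dots,k_j\}$ into full dyadic intervals, at most one per level $0\le l\le N_j$, and taking products across coordinates gives at most one dyadic box per multi-level, hence at most $M=\prod_{j=1}^d(N_j+1)$ boxes; Cauchy--Schwarz then yields one factor of $M$, the max-by-sum bound together with disjointness of the boxes at a fixed multi-level yields a second factor of $M$, and the elementary bound $N_j+1\le 2\log(2n_j)/\log 2$ converts $M^2$ into the stated logarithmic factor with the explicit constant $C=(2/\log 2)^{2d}$. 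Two small points deserve a sentence in a polished write-up: (a) when $n_j$ is not a power of two, either zero-pad the array up to length $2^{N_j}$ or observe that the intervals produced by the binary decomposition of $\{1,\dots,k_j\}$ are full dyadic intervals contained in $\{1,\dots,n_j\}$, so truncated boxes never actually enter the argument; (b) the collapse step requires only that the boxes of a fixed multi-level be pairwise disjoint subsets of $\{\mathbf{i}\prec\mathbf{n}\}$, not that they partition it, which you correctly note. Compared with the paper's bare citation, your route has the advantages of being self-contained, of making visible that pairwise (rather than mutual) independence is all that is used, and of producing an explicit dimensional constant.
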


In the following theorem, we establish a result on $L_1$ convergence.
        When the array $\{ X_{\mathbf{n}}, \mathbf{n} \in \mathbb{Z}^{d}_{+}\}$
is uniformly integrable, this result reduce to the case $p=1$ of Theorem 2.1 of Th\`{a}nh \cite{thanh2005lp}.
We would like to note that Th\`{a}nh \cite{thanh2005lp} established $L_1$ convergence for partial sums of
real-valued random variables while Theorem \ref{theorem22} establishes $L_1$ convergence for maximal partial sums for
random variables taking valued in $\mathcal{H}$. For $L_1$ convergence for partial sums of dependent $\mathcal{H}$-valued random
vectors, we refer the reader to a recent paper by Ord{\'o}{\~n}ez Cabrera et al. \cite{ordonezcabrera2020new}.

        \begin{theorem}\label{theorem22}
            Let $\{ X_{\mathbf{n}}, \mathbf{n} \in \mathbb{Z}^{d}_{+}\}$ be a $d$-dimensional array of
            pairwise independent $\mathcal{H}$-valued random vectors. If $ \left \{ \|X_{\mathbf{n}}\|, \mathbf{n} \in \mathbb{Z}^{d}_{+} \right \}$ is
            uniformly intergrable in the Ces\`{a}ro sense, then
            \begin{equation}\label{24}
                \dfrac{ \max_{\mathbf{1\prec k\prec n}} \left\|\sum_{\mathbf{i\prec k}} (X_{\mathbf{i}} -\mathbb{E}X_{\mathbf{i}})  \right\| }{|\mathbf{n}|} \rightarrow 0 \text{ in } L_1 \text{ as } |\mathbf{n}| \rightarrow \infty.
            \end{equation}
        \end{theorem}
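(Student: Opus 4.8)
The plan is to follow the truncation strategy of Theorem~\ref{theorem21}, but now to exploit the pairwise independence through the maximal inequality of Lemma~\ref{lemma24} in order to control the centered, truncated sums. Fix $\varepsilon>0$. By the Ces\`aro uniform integrability of $\{\|X_{\mathbf{n}}\|,\mathbf{n}\in\mathbb{Z}^d_+\}$, choose $a>0$ so that $\sup_{\mathbf{n}\succ\mathbf{1}}|\mathbf{n}|^{-1}\sum_{\mathbf{i}\prec\mathbf{n}}\mathbb{E}(\|X_{\mathbf{i}}\|\mathbf{1}(\|X_{\mathbf{i}}\|>a))<\varepsilon$ (note that this condition also guarantees $\mathbb{E}\|X_{\mathbf{i}}\|<\infty$, so each $\mathbb{E}X_{\mathbf{i}}$ is well defined). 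For each $\mathbf{i}\succ\mathbf{1}$ set $Y_{\mathbf{i}}=X_{\mathbf{i}}\mathbf{1}(\|X_{\mathbf{i}}\|>a)$ and $Z_{\mathbf{i}}=X_{\mathbf{i}}\mathbf{1}(\|X_{\mathbf{i}}\|\le a)$, so that $X_{\mathbf{i}}-\mathbb{E}X_{\mathbf{i}}=(Y_{\mathbf{i}}-\mathbb{E}Y_{\mathbf{i}})+(Z_{\mathbf{i}}-\mathbb{E}Z_{\mathbf{i}})$. By the triangle inequality it suffices to bound the $L_1$ norms of the two maximal partial sums built from $\{Y_{\mathbf{i}}-\mathbb{E}Y_{\mathbf{i}}\}$ and $\{Z_{\mathbf{i}}-\mathbb{E}Z_{\mathbf{i}}\}$ separately.

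For the large-value part I would simply dominate the maximum of the norms of the partial sums by the full sum of the norms and use centering: since $\|Y_{\mathbf{i}}-\mathbb{E}Y_{\mathbf{i}}\|\le\|Y_{\mathbf{i}}\|+\mathbb{E}\|Y_{\mathbf{i}}\|$, one obtains
\[
\frac{1}{|\mathbf{n}|}\,\mathbb{E}\max_{\mathbf{1}\prec\mathbf{k}\prec\mathbf{n}}\Big\|\sum_{\mathbf{i}\prec\mathbf{k}}(Y_{\mathbf{i}}-\mathbb{E}Y_{\mathbf{i}})\Big\|\le\frac{2}{|\mathbf{n}|}\sum_{\mathbf{i}\prec\mathbf{n}}\mathbb{E}\big(\|X_{\mathbf{i}}\|\mathbf{1}(\|X_{\mathbf{i}}\|>a)\big)<2\varepsilon
\]
uniformly in $\mathbf{n}\succ\mathbf{1}$, by the choice of $a$. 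This term requires no independence.

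The crux is the small-value part. Here I would observe that each $Z_{\mathbf{i}}$ is a Borel function of $X_{\mathbf{i}}$ alone, so $\{Z_{\mathbf{i}}-\mathbb{E}Z_{\mathbf{i}}\}$ is again a pairwise independent, mean-zero array with finite second moments, to which Lemma~\ref{lemma24} applies. Using $\mathbb{E}\|Z_{\mathbf{i}}-\mathbb{E}Z_{\mathbf{i}}\|^2\le\mathbb{E}\|Z_{\mathbf{i}}\|^2\le a^2$ together with Lemma~\ref{lemma24} and the Cauchy--Schwarz inequality $\mathbb{E}|W|\le(\mathbb{E}W^2)^{1/2}$, I obtain
\[
\frac{1}{|\mathbf{n}|}\,\mathbb{E}\max_{\mathbf{1}\prec\mathbf{k}\prec\mathbf{n}}\Big\|\sum_{\mathbf{i}\prec\mathbf{k}}(Z_{\mathbf{i}}-\mathbb{E}Z_{\mathbf{i}})\Big\|\le a\sqrt{C}\,\prod_{j=1}^d\frac{\log(2n_j)}{\sqrt{n_j}}.
\]
The main obstacle, and precisely the reason Lemma~\ref{lemma24} is needed in this logarithmic form, is to show this last product tends to $0$ as $|\mathbf{n}|\to\infty$. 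Here I would use that $d$ is fixed and that $g(x):=\log(2x)/\sqrt{x}$ is bounded on $[1,\infty)$ by some $M<\infty$ with $\sup_{x\ge T}g(x)\to0$ as $T\to\infty$. Indeed, $\max_j n_j\ge|\mathbf{n}|^{1/d}$, so as $|\mathbf{n}|\to\infty$ the factor corresponding to the largest coordinate tends to $0$ while the remaining $d-1$ factors stay bounded by $M$; hence the product tends to $0$.

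Finally, combining the two estimates yields, for all $\mathbf{n}$ with $|\mathbf{n}|$ sufficiently large, a bound of the form $2\varepsilon+o(1)<3\varepsilon$ for $|\mathbf{n}|^{-1}\mathbb{E}\max_{\mathbf{1}\prec\mathbf{k}\prec\mathbf{n}}\|\sum_{\mathbf{i}\prec\mathbf{k}}(X_{\mathbf{i}}-\mathbb{E}X_{\mathbf{i}})\|$. Since $\varepsilon>0$ is arbitrary, \eqref{24} follows.
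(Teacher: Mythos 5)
Your proof is correct and follows essentially the same route as the paper: the same truncation at level $a$, the same crude bound on the $\{Y_{\mathbf{i}}\}$ part, and the same combination of Jensen/Cauchy--Schwarz with Lemma~\ref{lemma24} on the centered $\{Z_{\mathbf{i}}\}$ part. The only differences are cosmetic refinements in your favor: you use the sharper Hilbert-space variance bound $\mathbb{E}\|Z_{\mathbf{i}}-\mathbb{E}Z_{\mathbf{i}}\|^2\le\mathbb{E}\|Z_{\mathbf{i}}\|^2$ instead of the paper's factor $4$, and you explicitly justify (via $\max_j n_j\ge|\mathbf{n}|^{1/d}$) why $\prod_{j=1}^d\log(2n_j)/\sqrt{n_j}\to0$ as $|\mathbf{n}|\to\infty$, a step the paper merely asserts.
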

        \begin{proof}
            Let $\varepsilon > 0$ be arbitrary. Since $\left \{\|X_{\mathbf{n}}\|, \mathbf{n} \in \mathbb{Z}^{d}_{+} \right \} $ is uniformly intergrable in the Ces\`{a}ro sense, there exists a
            constant $a > 0$ such that
            \begin{equation}\label{25}
                \sup\limits_{\mathbf{n} \succ \mathbf{1}} \dfrac{1}{\left | \mathbf{n} \right |}  \sum\limits_{\mathbf{i \prec n}} \mathbb{E}\left( \|X_{\mathbf{i}}\|\mathbf{1}\left( \|X_{\mathbf{i}}\| > a \right) \right) < \dfrac{\varepsilon}{2}.
            \end{equation}
            For each $\mathbf{i} \succ \mathbf{1}$, put
            $
            Y_\mathbf{i} = X_{\mathbf{i}} \mathbf{1}\left( \|X_{\mathbf{i}}\| > a \right)$  and $ Z_\mathbf{i} = X_{\mathbf{i}}\mathbf{1}\left( \|X_{\mathbf{i}}\| \leq a \right).
           $
We have from the triangular inequality, Jensen's inequality and \eqref{25} that
               $$
\mathbb{E} \left( \dfrac{ \max_{\mathbf{1\prec k\prec n}}
\left\|\sum_{\mathbf{i\prec k}} (X_{\mathbf{i}}
-\mathbb{E}X_{\mathbf{i}})
                    \right\| }{|\mathbf{n}|} \right)
                    \le \dfrac{1}{|\mathbf{n}|} \mathbb{E} \left( \max_{\mathbf{1\prec k\prec n}} \left\|
                    \sum_{\mathbf{i\prec k}} \left( Y_{\mathbf{i}} -\mathbb{E}Y_{\mathbf{i}} \right) + \sum_{\mathbf{i\prec k}} \left( Z_{\mathbf{i}} -\mathbb{E}Z_{\mathbf{i}} \right)       
                    \right\| \right)             
$$

$$
\le \dfrac{1}{|\mathbf{n}|} \left[
                    \mathbb{E}\left( \max_{\mathbf{1\prec k\prec n}} \left\|
                    \sum_{\mathbf{i\prec k}} \left( Y_{\mathbf{i}} -\mathbb{E}Y_{\mathbf{i}} \right) \right\| \right) +
                    \mathbb{E} \left( \max_{\mathbf{1\prec k\prec n}} \left\| \sum_{\mathbf{i\prec k}} \left( Z_{\mathbf{i}} -\mathbb{E}Z_{\mathbf{i}} \right) \right\| \right)
                    \right]
                  $$
                  $$
                     \le \dfrac{1}{|\mathbf{n}|}\sum_{\mathbf{i\prec n}} \mathbb{E} \| Y_\mathbf{i} - \mathbb{E} Y_\mathbf{i}\|
                     + \dfrac{1}{|\mathbf{n}|}\left[ \mathbb{E} \max_{\mathbf{1\prec k\prec n}} \left\| \sum_{\mathbf{i\prec k}}
                     \left( Z_{\mathbf{i}} -\mathbb{E}Z_{\mathbf{i}} \right) \right\| ^2 \right]^{1/2}
$$
$$
                      \le \dfrac{2}{|\mathbf{n}|}\sum_{\mathbf{i\prec n}} \mathbb{E} \| Y_\mathbf{i}\|
                    +  \dfrac{1}{|\mathbf{n}|}\left[ \mathbb{E} \max_{\mathbf{1\prec k\prec n}}
                    \left\| \sum_{\mathbf{i\prec k}} \left( Z_{\mathbf{i}} -\mathbb{E}Z_{\mathbf{i}} \right) \right\| ^2 \right]^{1/2}
$$
\begin{equation}\label{26}
                     \le \varepsilon + \dfrac{1}{|\mathbf{n}|}\left[ \mathbb{E} \max_{\mathbf{1\prec k\prec n}}
                     \left\| \sum_{\mathbf{i\prec k}} \left( Z_{\mathbf{i}} -\mathbb{E}Z_{\mathbf{i}} \right) \right\| ^2 \right]^{1/2}.
\end{equation}
            By applying Lemma \ref{lemma24}, there exists a constant $C>0$ such that
$$
               \dfrac{1}{|\mathbf{n}|}\left[ \mathbb{E} \max_{\mathbf{1\prec k\prec n}} \left\| \sum_{\mathbf{i\prec k}}
                    \left( Z_{\mathbf{i}} -\mathbb{E}Z_{\mathbf{i}} \right) \right\| ^2 \right]^{1/2}
                  $$
                  $$
                    \leq \dfrac{C\log(2n_1)\log(2n_2)\cdots \log(2n_d)}{|\mathbf{n}|} \left( \sum_{\mathbf{i\prec n}} E\|Z_{\mathbf{i}} - \mathbb{E}Z_{\mathbf{i}}\|^2 \right)^{1/2}
$$
$$
                    \leq \dfrac{C\log(2n_1)\log(2n_2)\cdots \log(2n_d)}{|\mathbf{n}|} \left( 4\sum_{\mathbf{i\prec n}} E\|Z_{\mathbf{i}}\|^2 \right)^{1/2}
$$
\begin{equation}\label{27}
                     \leq \dfrac{2aC\log(2n_1)\log(2n_2)\cdots \log(2n_d)}{|\mathbf{n}|^{1/2}} \rightarrow 0 \text{ as } |\mathbf{n}| \rightarrow \infty.
\end{equation}
            Since $\varepsilon>0$ is arbitrary, it follows from \eqref{26} and \eqref{27} that
            \[
            \mathbb{E} \left( \dfrac{ \max_{\mathbf{1\prec k\prec n}} \left\|\sum_{\mathbf{i\prec k}} (X_{\mathbf{i}} -\mathbb{E}X_{\mathbf{i}})  \right\| }{|\mathbf{n}|} \right) \rightarrow 0 \text{ as } |\mathbf{n}| \rightarrow 0.
            \]
            Thus \eqref{24} holds.
        \end{proof}

\begin{remark}
    {
        \rm
        \begin{description}
            \item[(i)] In Hong and Volodin \cite{hong1999marcinkiewicz-type},
            the author proved a result on $L_p$ convergence for
            double array of random variables, $0< p<2$ under a stochastic domination condition
            which is much stronger than condition being uniform integrability under the Ces\`{a}ro sense.

            \item[(ii)] By the same method, we can extend Theorem \ref{theorem22} to random
            variables which are coordinatewise and pairwise negative dependent (see \cite{hien2019negative}).
            Ord{\'o}{\~n}ez Cabrera et al. \cite{ordonezcabrera2020new} recently introduce the concept of
            negatively correlated integrability $\mathcal{H}$-random vectors. It would
            be interesting to extend Theorem \ref{theorem22} to this new dependence structure.
        \end{description}
    }
\end{remark}

\section{Some basic results on the Ces\`{a}ro uniform integrablity for $d$-dimensional arrays of random variables}

The concept of uniform integrability in the Ces\`{a}ro sense for sequences of
real valued-random variables was introduced by Chandra \cite{chandra1989uniform}. In
Chandra \cite{chandra1989uniform} and Chandra and Goswami \cite{chandra1992cesaro}, the authors
established criteria for this new type of uniform integrablity for
sequences ($1$-dimension) of random variables.
In this section, we will present these
results for $d$-dimensional arrays for completeness.

We note that a criterion for uniform integrability in the Ces\`{a}ro sense for sequences of random variables
was proved by Chandra \cite[Theorem 3]{chandra1989uniform}. Similar to Theorem 3 of Chandra \cite{chandra1989uniform}, we have the following criterion for uniform integrability in the Ces\`{a}ro sense for $d$-dimensional arrays of random variables.

\begin{theorem}\label{theorem31}
    A $d$-dimensional array of random variables $ \left \{ X_{\mathbf{n}}, \mathbf{n}
    \in \mathbb{Z}^{d}_{+} \right \}$ is uniformly intergrable in the Ces\`{a}ro sense if and only if
    \begin{description}
        \item[(i)] $\sup\limits_{\mathbf{n} \succ \mathbf{1}} \dfrac{1}{\left | \mathbf{n} \right |}  \sum\limits_{\mathbf{i \prec n}} \mathbb{E}\left( |X_{\mathbf{i}}| \right) < \infty$
    \end{description}
    and
    \begin{description}
        \item[(ii)] for each $\varepsilon > 0$, there exists $\delta>0$ such that whenever $\left\{A_{\mathbf{n}}\right\}_{\mathbf{n}\geq \mathbf{1}}$ is a $d$-dimensional array of events satisfying the condition that
        \begin{equation}\label{31}
            \sup\limits_{\mathbf{n} \succ \mathbf{1}} \dfrac{1}{\left | \mathbf{n} \right |}  \sum\limits_{\mathbf{i \prec n}} \mathbb{P}\left( A_{\mathbf{i}} \right) < \delta
        \end{equation}
        we have
        \begin{equation}\label{32}
            \sup\limits_{\mathbf{n} \succ \mathbf{1}} \dfrac{1}{\left | \mathbf{n} \right |}  \sum\limits_{\mathbf{i \prec n}} \mathbb{E}\left( |X_{\mathbf{i}}|\mathbf{1}\left( A_{\mathbf{i}} \right) \right) < \varepsilon.
        \end{equation}
    \end{description}
\end{theorem}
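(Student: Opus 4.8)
The plan is to reproduce, in the $d$-dimensional Ces\`{a}ro setting, the classical equivalence between uniform integrability and the pair ``$L_1$-boundedness plus uniform absolute continuity'', replacing ordinary expectations by the averaged quantities $\frac{1}{|\mathbf{n}|}\sum_{\mathbf{i}\prec\mathbf{n}}(\cdot)$ and taking suprema over $\mathbf{n}\succ\mathbf{1}$ throughout. The only structural feature of the lattice I will use is that the index set $\{\mathbf{i}:\mathbf{i}\prec\mathbf{n}\}$ contains exactly $|\mathbf{n}|$ points, so that averaging a constant $c$ over this set returns $c$.

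First I would prove that uniform integrability in the Ces\`{a}ro sense implies (i) and (ii). For (i), fix $a$ so large that the tail supremum in the definition is below $1$; splitting $\mathbb{E}|X_\mathbf{i}| = \mathbb{E}(|X_\mathbf{i}|\mathbf{1}(|X_\mathbf{i}|\le a)) + \mathbb{E}(|X_\mathbf{i}|\mathbf{1}(|X_\mathbf{i}|> a))$ and bounding the first summand by $a$ gives $\frac{1}{|\mathbf{n}|}\sum_{\mathbf{i}\prec\mathbf{n}}\mathbb{E}|X_\mathbf{i}| \le a + 1$ uniformly in $\mathbf{n}$. For (ii), given $\varepsilon>0$ choose $a$ with tail supremum below $\varepsilon/2$ and set $\delta=\varepsilon/(2a)$; on any event array $\{A_\mathbf{i}\}$ satisfying \eqref{31}, the same truncation splits $\mathbb{E}(|X_\mathbf{i}|\mathbf{1}(A_\mathbf{i}))$ into a part bounded by $a\,\mathbb{P}(A_\mathbf{i})$ and the $a$-tail, yielding $\le a\delta + \varepsilon/2 = \varepsilon$ after averaging and taking the supremum, which is exactly \eqref{32}.

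For the converse I would assume (i) and (ii) and verify the tail condition directly. Write $M$ for the finite supremum in (i), let $\varepsilon>0$, and take the $\delta>0$ furnished by (ii). The point is that the truncation events $A_\mathbf{i}=\{|X_\mathbf{i}|>a\}$ automatically satisfy the hypothesis \eqref{31} once $a$ is large: Markov's inequality gives $\mathbb{P}(|X_\mathbf{i}|>a)\le \mathbb{E}|X_\mathbf{i}|/a$, hence $\sup_{\mathbf{n}\succ\mathbf{1}}\frac{1}{|\mathbf{n}|}\sum_{\mathbf{i}\prec\mathbf{n}}\mathbb{P}(|X_\mathbf{i}|>a)\le M/a<\delta$ as soon as $a>M/\delta$. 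Applying (ii) to these events then forces the tail supremum below $\varepsilon$, and since that supremum is nonincreasing in $a$ the bound persists for all larger $a$; as $\varepsilon$ is arbitrary, the uniform integrability in the Ces\`{a}ro sense follows.

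I do not anticipate a genuine obstacle here, since the argument is a faithful transcription of the one-dimensional criterion of Chandra \cite{chandra1989uniform}. The only points requiring a little care are keeping the suprema over $\mathbf{n}$ outside every estimate, so that all bounds are uniform in $\mathbf{n}$, and, in the converse direction, using the monotonicity of the tail supremum in $a$ to pass from ``small for one large $a$'' to the actual limit as $a\to\infty$.
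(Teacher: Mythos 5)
Your proof is correct and follows essentially the same route as the paper's own: the identical truncation argument (bounding the truncated part by $a$, respectively by $a\,\mathbb{P}(A_\mathbf{i})$, with $\delta=\varepsilon/(2a)$) for the forward direction, and Markov's inequality applied to the events $\{|X_\mathbf{i}|>a\}$ together with monotonicity of the tail in $a$ for the converse. If anything, your choice of $a$ strictly larger than $M/\delta$ is slightly cleaner than the paper's, since it yields the strict inequality demanded in \eqref{31}, whereas the paper's Markov bound at the threshold $K/\delta$ only gives $\le\delta$.
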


\begin{proof}
    Firstly, we prove the \lq\lq only if\rq\rq\, part.
    Suppose that $ \left \{ X_{\mathbf{n}}, \mathbf{n} \in \mathbb{Z}^{d}_{+} \right \}$ is a $d$-dimensional array of random variables which is uniformly intergrable in the Ces\`{a}ro sense.
    Let $a_0>0$ be such that
        \begin{equation}\label{33}
        \sup\limits_{\mathbf{n} \succ \mathbf{1}} \dfrac{1}{\left | \mathbf{n} \right |}  \sum\limits_{\mathbf{i \prec n}} \mathbb{E}\left( |X_{\mathbf{i}}|\mathbf{1}\left( |X_{\mathbf{i}}| > a_0 \right) \right) \leq 1.
    \end{equation}
        We have
    \[
    \mathbb{E}\left( | X_{\mathbf{i}} | \right) \leq a_0 + \mathbb{E}\left( |X_{\mathbf{i}}|\mathbf{1}\left( |X_{\mathbf{i}}| > a_0 \right) \right), \, \forall \mathbf{i} \succ \mathbf{1}.
    \]
Therefore,
    \[
    \dfrac{1}{\left | \mathbf{n} \right |}  \sum\limits_{\mathbf{i \prec n}} \mathbb{E}\left( |X_{\mathbf{i}}| \right) \le a_0 + \dfrac{1}{\left | \mathbf{n} \right |}  \sum\limits_{\mathbf{i \prec n}} \mathbb{E}\left( |X_{\mathbf{i}}|\mathbf{1}\left( |X_{\mathbf{i}}| > a_0 \right) \right) \le a_0 + 1, \, \forall \mathbf{n} \succ \mathbf{1} \quad (\text{by } \eqref{33}).
    \]
    Thus (i) holds.

Now, we prove (ii). Fix an $\varepsilon > 0$. Let $a_0 > 0$ be such that
\begin{equation}\label{34}
        \sup\limits_{\mathbf{n} \succ \mathbf{1}} \dfrac{1}{\left | \mathbf{n} \right |}  \sum\limits_{\mathbf{i \prec n}} \mathbb{E}\left( |X_{\mathbf{i}}|\mathbf{1}\left( |X_{\mathbf{i}}| > a_0 \right) \right) \leq \dfrac{\varepsilon}{2}.
    \end{equation}
    Put $\delta = \dfrac{\varepsilon}{2a_0}$. If \eqref{31} holds, then
        $$
         \dfrac{1}{\left | \mathbf{n} \right |}  \sum\limits_{\mathbf{i \prec n}} \mathbb{E}\left( |X_{\mathbf{i}}|\mathbf{1}\left( A_{\mathbf{i}} \right) \right)
        \leq \dfrac{1}{\left | \mathbf{n} \right |}  \sum\limits_{\mathbf{i \prec n}}
        \left(
        a_0\mathbb{P}\left( A_{\mathbf{i}} \right) + \mathbb{E}\left( |X_{\mathbf{i}}|\mathbf{1}\left( |X_{\mathbf{i}}| > a_0 \right) \right)
        \right)
        $$
        $$
         = a_0 \, \dfrac{1}{\left | \mathbf{n} \right |}  \sum\limits_{\mathbf{i \prec n}}
         \mathbb{P}\left( A_{\mathbf{i}} \right) + \dfrac{1}{\left | \mathbf{n} \right |}
         \sum\limits_{\mathbf{i \prec n}} \mathbb{E}\left( |X_{\mathbf{i}}|\mathbf{1}\left( |X_{\mathbf{i}}| > a_0 \right) \right)
         \leq a_0\delta + \dfrac{\varepsilon}{2} \quad (\text{by }
         \eqref{34})  = \varepsilon
    $$
thereby proving (ii).

    Secondly, we prove the \lq\lq if\rq\rq\, part. Let $\varepsilon > 0$ be fixed. By (ii), there exists a $\delta > 0$ such that \eqref{31} implies \eqref{32}.
    Put
    \[
    K = \sup\limits_{\mathbf{n} \succ \mathbf{1}} \dfrac{1}{\left | \mathbf{n} \right |}  \sum\limits_{\mathbf{i \prec n}} \mathbb{E}\left( |X_{\mathbf{i}}| \right).
    \]
    By applying Markov's inequality, we have
    \[
    \mathbb{P} \left( | X_{\mathbf{i}} | \geq \dfrac{K}{\delta} \right) \leq \dfrac{\mathbb{E} \left( | X_{\mathbf{i}} | \right) }{ K/\delta }, \, \forall \mathbf{i} \succ \mathbf{1}.
    \]
    Therefore,
    \[
    \dfrac{1}{\left | \mathbf{n} \right |}  \sum\limits_{\mathbf{i \prec n}} \mathbb{P}\left( |X_{\mathbf{i}}| \geq \dfrac{K}{\delta} \right) \le \dfrac{K}{K/\delta} = \delta,\, \forall \mathbf{n} \succ \mathbf{1}.
    \]
        By applying (ii) with $\left\{A_{\mathbf{i}} = \left( | X_{\mathbf{i}} | \geq \dfrac{K}{\delta} \right) \right\}_{\mathbf{i}\geq \mathbf{1}}$, we obtain
        \begin{equation}\label{35}
        \sup\limits_{\mathbf{n} \succ \mathbf{1}} \dfrac{1}{\left | \mathbf{n} \right |}  \sum\limits_{\mathbf{i \prec n}} \mathbb{E}\left( |X_{\mathbf{i}}|\mathbf{1}\left( | X_{\mathbf{i}} | \geq \dfrac{K}{\delta} \right) \right) < \varepsilon.
    \end{equation}
    If $a > \dfrac{K}{\delta}$, then
    \[
    \mathbb{E}\left( |X_{\mathbf{i}}|\mathbf{1}\left( |X_{\mathbf{i}}| > a \right) \right) \leq \mathbb{E}\left( |X_{\mathbf{i}}|\mathbf{1}\left( |X_{\mathbf{i}}| > \dfrac{K}{\delta} \right) \right)
    \]
    which implies that
    \[
    \sup\limits_{\mathbf{n} \succ \mathbf{1}} \dfrac{1}{\left | \mathbf{n} \right |}  \sum\limits_{\mathbf{i \prec n}} \mathbb{E}\left( |X_{\mathbf{i}}|\mathbf{1}\left( |X_{\mathbf{i}}| > a \right) \right) < \varepsilon\, (\text{by } \eqref{35}).
    \]
    Hence the proof is completed.
\end{proof}

Chandra and Goswami \cite[p. 228]{chandra1992cesaro} established
the de La Vall\'{e}e Poussin criterion for uniform integrability in the Ces\`{a}ro sense for sequences of random variables.
This result for $d$-dimensional arrays of random variables can be stated as follows.

\begin{theorem}\label{theorem32}
    Let $ \left \{ X_{\mathbf{n}}, \mathbf{n} \in \mathbb{Z}^{d}_{+} \right \} $ be a $d$-dimensional array of random variables. Consider the statement (A)
    below: there exists a measurable function $\phi$ defined on   $\left[0,\infty\right)$ with $\phi(0) = 0$ such that $\dfrac{\phi (t)}{t} \rightarrow \infty$
    as $t\rightarrow \infty $ and  $\sup_{\mathbf{n} \succ \mathbf{1}} \dfrac{1}{|\mathbf{n}|} \sum_{\mathbf{i\prec n}} \mathbb{E}\left( \phi\left( |X_\mathbf{i}|\right)  \right)<\infty$.
    \begin{description}
        \item[(i)] If (A) holds, then $ \left \{ X_{\mathbf{n}}, \mathbf{n} \in \mathbb{Z}^{d}_{+} \right \} $ is uniformly intergrable in the Ces\`{a}ro sense.
        \item[(ii)] If $\left \{ X_{\mathbf{n}}, \mathbf{n} \in \mathbb{Z}^{d}_{+} \right \} $ is uniformly intergrable in the Ces\`{a}ro sense, then (A) holds. Moreover, $\phi$ can be chosen so that $\dfrac{\phi(t)}{t}$ is increasing.
    \end{description}
\end{theorem}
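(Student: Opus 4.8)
The plan is to prove the two implications separately; part (i) is the routine \lq\lq sufficiency\rq\rq\ direction, while part (ii) is the substantive construction. Throughout I may assume $\phi\ge 0$, since this is the standard normalization and is precisely the kind of function the construction in (ii) produces. For part (i), assume (A) holds and set $M=\sup_{\mathbf{n}\succ\mathbf{1}}\frac{1}{|\mathbf{n}|}\sum_{\mathbf{i}\prec\mathbf{n}}\mathbb{E}\left(\phi(|X_\mathbf{i}|)\right)<\infty$. Given $\varepsilon>0$, the hypothesis $\phi(t)/t\to\infty$ lets me choose $a$ so large that $t\le (\varepsilon/M)\,\phi(t)$ for every $t>a$. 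On the event $\{|X_\mathbf{i}|>a\}$ this gives the pointwise domination $|X_\mathbf{i}|\,\mathbf{1}(|X_\mathbf{i}|>a)\le (\varepsilon/M)\,\phi(|X_\mathbf{i}|)$, so after taking expectations, averaging over $\mathbf{i}\prec\mathbf{n}$, and passing to the supremum over $\mathbf{n}$ I obtain $\sup_{\mathbf{n}\succ\mathbf{1}}\frac{1}{|\mathbf{n}|}\sum_{\mathbf{i}\prec\mathbf{n}}\mathbb{E}\left(|X_\mathbf{i}|\mathbf{1}(|X_\mathbf{i}|>a)\right)\le\varepsilon$. As this holds for all sufficiently large $a$, uniform integrability in the Ces\`aro sense follows.

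For part (ii) I would adapt the de La Vall\'ee Poussin scheme to the Ces\`aro averages. Write $\alpha_k=\sup_{\mathbf{n}\succ\mathbf{1}}\frac{1}{|\mathbf{n}|}\sum_{\mathbf{i}\prec\mathbf{n}}\mathbb{E}\left(|X_\mathbf{i}|\mathbf{1}(|X_\mathbf{i}|>k)\right)$ for $k\in\mathbb{Z}_+$. Uniform integrability in the Ces\`aro sense is exactly the statement $\alpha_k\to 0$, so I can select a strictly increasing sequence of positive integers $n_1<n_2<\cdots$ with $\alpha_{n_j}\le 2^{-j}$, whence $\sum_j\alpha_{n_j}<\infty$. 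Then I define the non-decreasing, integer-valued step function $g(t)=\#\{j:n_j\le t\}$ and set $\phi(t)=\int_0^t g(s)\,ds$. By construction $\phi(0)=0$, $\phi$ is convex and increasing, the ratio $\phi(t)/t=\frac{1}{t}\int_0^t g(s)\,ds$ is the running average of the non-decreasing function $g$ and is therefore increasing, and $g(t)\to\infty$ forces this average to tend to $\infty$; this secures every structural requirement listed in (A), including the final assertion that $\phi(t)/t$ can be taken increasing.

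The crux is to bound the Ces\`aro average of $\mathbb{E}\left(\phi(|X_\mathbf{i}|)\right)$, and the decisive step is the elementary identity
\[
\phi(x)=\int_0^x g(s)\,ds=\sum_j\int_0^x\mathbf{1}(n_j\le s)\,ds=\sum_j (x-n_j)^+\le\sum_j x\,\mathbf{1}(x>n_j),\qquad x\ge 0,
\]
obtained by expanding $g$ as a sum of indicators and using $(x-n_j)^+\le x\,\mathbf{1}(x>n_j)$. Applying this with $x=|X_\mathbf{i}|$, taking expectations (Tonelli justifies the interchange of sum and expectation since all terms are nonnegative), averaging, and invoking the definition of $\alpha_{n_j}$ yields
\[
\frac{1}{|\mathbf{n}|}\sum_{\mathbf{i}\prec\mathbf{n}}\mathbb{E}\left(\phi(|X_\mathbf{i}|)\right)\le\sum_j\frac{1}{|\mathbf{n}|}\sum_{\mathbf{i}\prec\mathbf{n}}\mathbb{E}\left(|X_\mathbf{i}|\mathbf{1}(|X_\mathbf{i}|>n_j)\right)\le\sum_j\alpha_{n_j}<\infty,
\]
with the bound uniform in $\mathbf{n}$, which is exactly the finiteness demanded by (A). I expect the main obstacle to be precisely this uniform-in-$\mathbf{n}$ control: one must choose the $n_j$ summably and then legitimately interchange the series in $j$ with the Ces\`aro sum before taking the supremum over $\mathbf{n}$. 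Once the pointwise bound $\phi(x)\le\sum_j x\,\mathbf{1}(x>n_j)$ and the summability $\sum_j\alpha_{n_j}<\infty$ are in hand, the remaining estimates are routine.
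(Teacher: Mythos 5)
Your proof is correct and follows essentially the same approach as the paper's: part (i) is the same pointwise-domination argument (the paper divides by $K+1$ rather than $K$, which also covers the degenerate case $K=0$ that your $\varepsilon/M$ would need a one-line fix for), and part (ii) constructs the very same function $\phi(t)=\int_0^t g(s)\,ds$, with $g$ the counting function of a sequence of thresholds chosen so that the corresponding Ces\`aro tail averages are bounded by $2^{-j}$. Your identity $\phi(x)=\sum_j (x-n_j)^+ \le \sum_j x\,\mathbf{1}(x>n_j)$ is a cleaner route to the key bound than the paper's passage through $\sum_i u_i\,\mathbb{P}\left(|X_{\mathbf{k}}|>i-1\right)$ and integer tail sums, but it yields the same estimate.
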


\begin{proof}
    (i) Assuming that $(A)$ holds. Put
   $
    K = \sup_{\mathbf{n} \succ \mathbf{1}} \dfrac{1}{|\mathbf{n}|} \sum_{\mathbf{i\prec n}} \mathbb{E}\left( \phi\left( |X_\mathbf{i}|\right)  \right).
  $
    For each $\varepsilon > 0$, there exists an integer $N\geq 1$ such that $\phi (t) \geq \dfrac{t(K+1)}{\varepsilon}$ for $t> N$. Then for each $\mathbf{n} \succ \mathbf{1}$ and each $a > N$,
    \[
    \dfrac{1}{|\mathbf{n}|}\sum\limits_{\mathbf{i \prec n}} \mathbb{E}\left( |X_{\mathbf{i}}|\mathbf{1}\left( |X_{\mathbf{i}}| > a \right) \right) \leq
    \dfrac{1}{|\mathbf{n}|}\sum\limits_{\mathbf{i \prec n}} \mathbb{E}\left( \dfrac{\varepsilon \phi\left( |X_{\mathbf{i}}| \right) \mathbf{1}\left( |X_{\mathbf{i}}| > a \right) }{K+1}  \right) <\varepsilon.
    \]
    It follows that
    \[
    \lim_{a\to \infty}  \sup\limits_{\mathbf{n} \succ \mathbf{1}} \dfrac{1}{\left | \mathbf{n} \right |}  \sum\limits_{\mathbf{i \prec n}} \mathbb{E}\left( |X_{\mathbf{i}}|\mathbf{1}\left( |X_{\mathbf{i}}| > a \right) \right) = 0.
    \]
    Thus $ \left \{ X_{\mathbf{n}}, \mathbf{n} \in \mathbb{Z}^{d}_{+} \right \} $ is uniformly intergrable in the Ces\`{a}ro sense.

    (ii) Firstly, we construct the function $\phi(\cdot)$ such that for any increasing sequence $\{u_n,n\geq 1\}$ of nonnegative real numbers
    with $u_n\rightarrow \infty$ as $n\rightarrow \infty$, we put $\phi(0) = 0$ and
\begin{equation}\label{key01}
        \phi(t)=\int_{0}^{t} g(x) \mathrm{d} x, \ t>0,
\end{equation}
where $g\colon (0,\infty) \rightarrow (0,\infty)$ is defined as
\begin{equation}\label{key02}
        g(x)=u_{n} \text { if } n-1 \leq x<n,\ n \geq 1.
\end{equation}

    Secondly, we prove that $f(t) := \dfrac{\phi (t)}{t}$ is increasing and $f(t) \rightarrow \infty $ as $t\to\infty$.
    Fix $n\geq 1$ and $t\in(n-1,n)$. Let $u_0 = 0$. Since $\phi(t) = \sum_{i=0}^{n-1} u_i +
    (t-n+1)u_n$,
   $
    t^2f'(t) = \sum_{i=0}^{n-1}(u_n - u_i) \geq 0.
   $
    It implies that $f'(t)\ge 0$ on each interval $(n-1,n)$, $n\ge1$. Combining this with the fact that $f(t)$ is continuous on $(0,\infty)$ yields $f(t)$ is increasing.
    Since $u_n \rightarrow \infty$ as $n\rightarrow \infty$, $f(n) = \dfrac{1}{n} \sum_{i=1}^{n} u_i \rightarrow \infty$ as $n\rightarrow \infty$. Therefore, $f(t) \rightarrow \infty $ as $t\rightarrow\infty$.

    Finally, we prove that such  sequence $\{u_n,n\geq 1\}$ can be chosen so as to also satisfy
    \begin{equation}\label{36}
        \sup_{\mathbf{n} \succ \mathbf{1}} \dfrac{1}{|\mathbf{n}|} \sum_{\mathbf{i\prec n}} \mathbb{E}\left( \phi\left( |X_\mathbf{i}|\right)  \right) \leq 1.
    \end{equation}
    Suppose that $\left \{ X_{\mathbf{n}}, \mathbf{n} \in \mathbb{Z}^{d}_{+} \right \} $ is uniformly intergrable in the Ces\`{a}ro sense, then there exists a sequence $\{N_j,j\geq 1\}$ of positive integers such that $N_j \rightarrow \infty$ as $j\rightarrow \infty$ and for each $j\geq 1$,
    \begin{equation}\label{37}
        \sup_{\mathbf{n} \succ \mathbf{1}} \dfrac{1}{|\mathbf{n}|} \sum_{\mathbf{i\prec n}} \mathbb{E}\left( |X_{\mathbf{i}}| \mathbf{1}\left( |X_\mathbf{i}| \geq N_j \right)  \right) \leq 2^{-j}.
    \end{equation}
    For each $n\ge 1$, we consider a sequence $\{u_n,n\geq 1\}$ defined by $u_n={\rm card}(A)$, where
\[   A=\{j\in \mathbb{N}: N_j<n\}.\]
    Clearly, $\{u_n,n\ge1\}$ is increasing, and so that $\phi(t)$ is increasing.
    Since $N_j \rightarrow \infty$ as $j\rightarrow \infty$, we have $u_n\rightarrow \infty$ as $n\rightarrow \infty$.
    From \eqref{key01} and \eqref{key02}, we have for $t\in (n-1;n)$ and $u_0 = 0$ that
    \[\phi(t) = \sum_{i=0}^{n-1} u_i + (t-n+1)u_n \leq \sum_{i=1}^{n} u_i.\]
    Therefore,
   $$
         \mathbb{E}\left( \phi\left( |X_\mathbf{k}|\right)  \right)
        \leq \mathbb{E} \left[ \sum_{m=1}^{\infty} \left( \mathbf{1}\left( m - 1 < |X_{\mathbf{k}}| \leq m \right) \sum_{i=1}^{m} u_i \right)  \right]
$$
$$
  = \mathbb{E}\left[ \sum_{i=1}^{\infty} u_i \left( \sum_{m=i}^{\infty} \mathbf{1}\left( m - 1 < |X_{\mathbf{k}}| \leq m \right) \right) \right]
  = \sum_{i=1}^{\infty} u_i \mathbb{P} \left( |X_{\mathbf{k}}| > i - 1 \right),\ \mathbf{k}\succ \mathbf{1}.
   $$
    It follows that for any $\mathbf{n} \succ \mathbf{1}$,
   $$
         \sum_{\mathbf{1\prec k\prec n}} \mathbb{E}\left( \phi\left( |X_\mathbf{k}|\right)  \right)
        \leq \sum_{\mathbf{1\prec k\prec n}} \sum_{i=1}^{\infty} u_i \mathbb{P} \left( |X_{\mathbf{k}}| > i - 1 \right)
$$
$$
   \leq \sum_{\mathbf{1\prec k\prec n}} \sum_{i=1}^{\infty} \left[ \left( \sum_{j\colon N_j < i} 1 \right) \mathbb{P} \left( |X_{\mathbf{k}}| > i - 1 \right) \right]
        \leq \sum_{j=1}^{\infty}  \sum_{\mathbf{1\prec k\prec n}} \sum_{i = N_j}^{\infty} \mathbb{P} \left( |X_{\mathbf{k}}| > i \right)
$$
$$
  =\sum_{j=1}^{\infty}  \sum_{\mathbf{1\prec k\prec n}} \mathbb{E}\left( |X_{\mathbf{k}}| \mathbf{1}\left( |X_\mathbf{k}| \geq N_j \right)  \right)
         \leq |\mathbf{n}|\sum_{j=1}^{\infty} 2^{-j} \quad (\text{by } \eqref{37})  = |\mathbf{n}|.
    $$
    This implies \eqref{36} holds. The proof of the theorem is completed.
\end{proof}

\end{document}